\theoremstyle{plain}
\newtheorem{thm}{Theorem}[section]
\newtheorem{prop}[thm]{Proposition}
\newtheorem{lem}[thm]{Lemma}
\newtheorem{prob}[thm]{Problem}
\newcommand{\Gal }{\mathop{\rm Gal}\nolimits}
\newcommand{\Aut }{\mathop{\rm Aut}\nolimits}
\newcommand{\ch }{\mathop{\mathrm{char}}\nolimits}
\newcommand{\trd }{\mathop{\rm tr.deg}\nolimits}
\newcommand{\ord}{\mathop{\rm ord}\nolimits}
\newcommand{\ff}{{\bf f}}
\newcommand{\zs}{\{ 0\} }
\newcommand{\sm}{\setminus}
\newcommand{\id}{{\rm id}}
\newcommand{\nd}{\noindent}
\newcommand{\ol}{\overline}
\newcommand{\Q}{{\bf Q}}
\newcommand{\Z}{{\bf Z}}
\newcommand{\ep}{\epsilon}
\newcommand{\kx}{k[{\bf x}]}
\newcommand{\kxr}{k({\bf x})}
\newcommand{\kxz}{k[{\bf x},z]}
\newcommand{\kxzr}{k({\bf x},z)}
\newcommand{\A}{{\mathcal A}}
\begin{document}

\title{Hilbert's fourteenth problem and field modifications}

\date{}

\author{Shigeru Kuroda\thanks{
Partly supported by JSPS KAKENHI 
Grant Number 15K04826. }}

\footnotetext{
This paper is the final version 
of the preprint entitled 
``Hilbert's fourteenth problem and invariant fields 
of finite groups".}

\footnotetext{2010 {\it MSC}: 
Primary 13E15; Secondary 13A50, 14E07, 14E08}

\maketitle

\begin{abstract}
Let $\kxr =k(x_1,\ldots ,x_n)$ 
be the rational function field, 
and $k\subsetneqq L\subsetneqq \kxr $ 
an intermediate field. 
Then, 
{\it Hilbert's fourteenth problem} asks whether 
the $k$-algebra $A:=L\cap k[x_1,\ldots ,x_n]$ 
is finitely generated. 
Various counterexamples to this problem were already given, 
but the case $[\kxr :L]=2$ was open when $n=3$. 
In this paper, 
we study the problem in terms of 
the field-theoretic properties of $L$. 
We say that $L$ is {\it minimal} if 
the transcendence degree $r$ of $L$ 
over $k$ is equal to that of $A$. 
We show that, 
if $r\ge 2$ and $L$ is minimal, 
then there exists 
$\sigma \in \Aut _kk(x_1,\ldots ,x_{n+1})$ 
for which $\sigma (L(x_{n+1}))$ 
is minimal and a counterexample to the problem. 
Our result implies the existence of 
interesting new counterexamples 
including one 
with $n=3$ and $[\kxr :L]=2$. 
\end{abstract}

\section{Introduction and main results}
\setcounter{equation}{0}

Let $k$ be a field, 
$\kx =k[x_1,\ldots ,x_n]$ 
the polynomial ring in $n$ variables over $k$, 
and $\kxr :=Q(\kx )$, 
where $Q(R)$ denotes the field of fractions of 
$R$ for an integral domain $R$. 
In this paper, 
we give a simple and useful construction of 
counterexamples to the following problem.

\begin{prob}[Hilbert's fourteenth problem]
\label{prob:H14}
Let $k\subset L\subset \kxr $ be an intermediate field. 
Is the $k$-algebra $A:=L\cap \kx $ finitely generated?
\end{prob}

Since $\kx $ is normal, 
$A$ is integrally closed in $L$. 
This implies that 
$Q(A)$ is algebraically closed in $L$. 
We say that $L$ is {\it minimal} if 
the transcendence degree 
$r:=\trd _kA$ of $A$ over $k$ is equal to 
that of $L$, 
that is, $Q(A)=L$. 
Since $A=Q(A)\cap \kx $, 
we may assume that $L$ is minimal 
in Problem~\ref{prob:H14}.

By Zariski~\cite{Zariski}, 
the answer to Problem~\ref{prob:H14} 
is affirmative if $r\le 2$. 
If $r=1$, 
then $A=k[f]$ holds for some $f\in A$ (cf.~\cite{Zaks}), 
since $A$ is normal. 
Nagata~\cite{Nagata2} gave 
the first counterexample 
to Problem~\ref{prob:H14} 
when $(n,r)=(32,4)$. 
Later, 
Roberts~\cite{Rob} gave a different kind of 
counterexample when $(n,r)=(7,6)$ and $\ch k=0$. 
There are several generalizations of the results of 
Nagata 
(cf.\ 
\cite{Mukai1}, \cite{Steinberg} and \cite{Totaro}) 
and Roberts 
(cf.\ 
\cite{DF5}, \cite{F6}, \cite{KM} and \cite{Roberts}). 
In these counterexamples, 
$A$ are the invariant rings $\kx ^G$ 
for some subgroups $G$ of $\Aut _k\kx $. 
It is well known that
$\kx ^G$ is finitely generated if $G$ 
is a finite group (cf.~\cite{Noether}). 
Since $G\subset \Aut _{\kxr ^G}\kxr $, 
we know by Zariski~\cite{Zariski} that $\kx ^G$ is 
finitely generated if $n=3$ and $|G|=\infty $. 
There exist non-finitely generated 
invariant rings for $(n,r)=(5,4)$ 
(cf.\ \cite{DF5}), 
but no such example is known 
for $n=4$ or $r=3$.

Refining the method of 
\cite{Roberts}, 
the author gave counterexamples 
for $(n,r)=(4,3)$ in \cite{dim4}, 
and for $n=r=3$ in \cite{dim3}, 
not as invariant rings. 
When $n=3$, 
he also gave in \cite{algext} counterexamples 
with $[\kxr :L]=d$ for each $d\ge 3$. 
Some of them are the invariant fields 
for finite subgroups of $\Aut _k\kxr $. 
However, 
the case $d=2$ was not settled. 
This leads to the following more general problem.

In the following, 
let $k\subsetneqq M\subsetneqq \kxr $ 
be a minimal intermediate field. 
If $\phi :X\to Y$ is a map, 
then we sometimes write 
$Z^\phi :=\phi (Z)$ for $Z\subset X$.

\begin{prob}
\label{prob:FMP}
Assume that $\trd _kM\ge 3$. 
Does there always exist $\sigma \in \Aut _k\kxr $ 
such that $M^{\sigma }$ is minimal and a counterexample 
to Problem~$\ref{prob:H14}$? 
\end{prob}

We remark that Nagata~\cite{Nagata2} 
implies a positive answer to this problem 
when $n\ge 32$ and $M=k(x_1,\ldots ,x_4)$ 
(see also~\cite{Derksen}).

In this paper, 
we settle a `stable version' of Problem~\ref{prob:FMP} 
when $\ch k=0$. 
Namely, 
let $\kxz :=k[x_1,\ldots ,x_n,z]$ 
be the polynomial ring in $n+1$ variables over $k$, 
and let $\kxzr :=Q(\kxz )$. 
Then, 
we have the following theorem.

\begin{thm}\label{thm:SFMP}
Let $k$ be any field with $\ch k=0$. 
If $\trd _kM\ge 2$, $M\ne \kxr $ and $M$ is minimal, 
then there exists $\sigma \in \Aut _k\kxzr $ 
such that 
the $k$-algebra 
$\A :=M(z)^{\sigma }\cap \kxz $ is not finitely generated 
and $Q(\A )=M(z)^{\sigma }$. 
\end{thm}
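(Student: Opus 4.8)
The plan is to read the theorem as a positive answer to Problem~\ref{prob:FMP} for the single minimal field $N:=M(z)\subset\kxzr$. Set $r:=\trd_kM$ (so $r\ge2$) and $B:=M\cap\kx$, so that $Q(B)=M$ by minimality of $M$. First I would check that $N$ is minimal of transcendence degree $r+1\ge3$: writing $h\in N\cap\kxz$ as $g/e$ with $g,e\in M[z]$ and $e$ monic, Euclidean division of $g$ by $e$ in $M[z]$ together with $h\in\kxz$ forces the remainder to vanish, whence $h\in M[z]\cap\kxz=B[z]$, so $N\cap\kxz=B[z]$ and $Q(B[z])=M(z)=N$. Hence it suffices to produce $\sigma\in\Aut_k\kxzr$ such that $N^{\sigma}$ is minimal and $\A:=N^{\sigma}\cap\kxz$ is not finitely generated.

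The heart of the proof is the construction of $\sigma$, and here I would follow the mechanism behind the counterexamples of \cite{Rob} and of the author's \cite{dim3}, \cite{dim4}, \cite{algext}. The goal is to force into $\A$ a ``divisibility staircase'': writing an element of $N^{\sigma}$ as a polynomial in $\sigma(z)$ with coefficients in $M$ and clearing the denominator that $\sigma$ introduces, membership in $\kxz$ should require the successive coefficients to be divisible, in $\kx$, by prescribed powers of certain irreducible polynomials coming from $B$ and from that denominator; if this prescription is arranged so as not to be eventually affine in the degree, then for every $N$ the ring $\A$ is compelled to contain an element of $\sigma(z)$-degree $N$ which is an indispensable generator. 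Concretely I would take $\sigma$ essentially triangular --- acting on $x_1,\dots,x_n$ by a (possibly trivial) Cremona transformation and sending $z$ to a rational function whose denominator is a suitable polynomial $q$ built from $B$ --- and then choose the ``numerator shift'' so as to realize the staircase. Two regimes arise. If $\trd_kM<n$ there is a coordinate outside a transcendence basis of $M$ that gives room to host the obstruction. If $\trd_kM=n$, then $\kxr/M$ is finite of degree $d\ge2$, and --- when it is Galois --- the point is to conjugate $\Gal(\kxr/M)$ by $\sigma$ into a group of Cremona transformations of $\kxzr$ that does \emph{not} preserve $\kxz$: this is exactly what lets $\A$, which would be an invariant ring and hence finitely generated by Noether's theorem if that group preserved $\kxz$, be non-finitely generated instead; the case $d=2$ yields the new $n=3$ counterexample promised in the abstract. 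In every case $\sigma$ must also be chosen so that $\A$ contains, besides the staircase generators, elements generating $N^{\sigma}$ over $k$, which gives $Q(\A)=N^{\sigma}$ and hence minimality.

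Finally I would establish non-finite generation by introducing a monomial valuation $v$ on $\kxz$, with suitably chosen positive weights on $x_1,\dots,x_n$ and $z$, and checking that the $v$-leading forms of the staircase generators $w_1,w_2,\dots$ exhibit an infinite minimal generating set --- equivalently, that the $v$-degree subsemigroup of $\A$ is not finitely generated over the part coming from bounded $\sigma(z)$-degree --- which is a finite combinatorial verification once $\sigma$ is fixed. The hypothesis $\ch k=0$ is used both in the choice of $\sigma$, where binomial coefficients arising from powers of the shifted coordinate must remain invertible, and here, where finiteness of the associated finite-group data (or tameness of a locally nilpotent derivation whose kernel field is $N^{\sigma}$) is needed. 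The step I expect to be the genuine obstacle is the construction in the previous paragraph: producing a single uniform recipe for $\sigma$ valid for an arbitrary minimal $M$ with $\trd_kM\ge2$, and in particular in the tight case $\trd_kM=n$ with $[\kxr:M]=2$, where nearly all of the freedom for encoding the obstruction must come from the one new variable $z$ rather than from $\kx$.
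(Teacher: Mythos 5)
Your reduction to the statement ``$N:=M(z)$ is minimal of transcendence degree $\ge 3$, now find $\sigma$'' is fine, but everything after that is a plan rather than a proof, and the step you yourself flag as the obstacle --- a uniform recipe for $\sigma$ valid for every minimal $M$ with $\trd_kM\ge 2$ --- is exactly the content of the theorem; without it there is no proof. Moreover, the route you sketch for supplying it does not match what actually works. You propose a case split on $\trd_kM<n$ versus $\trd_kM=n$, and in the algebraic case a conjugation of $\Gal(\kxr/M)$ into Cremona transformations not preserving $\kxz$; but the extension $\kxr/M$ need not be Galois, and even when it is, ``the group does not preserve $\kxz$'' gives no mechanism forcing $M(z)^{\sigma}\cap\kxz$ to be non-finitely generated --- Noether's theorem only goes the other way. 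Likewise the monomial-valuation ``staircase'' is not carried out for any concrete $\sigma$, so nothing is verified.

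The paper's proof avoids all of these case distinctions by a two-step device you do not have. First, a field-modification theorem (Theorem~\ref{thm:FM}, proved via Lemma~\ref{lem:FM}) produces $\phi\in\Aut_k\kxr$ with $\kx^{\phi}\subset\kx$ such that $M^{\phi}$ satisfies condition $(\dag)$: with $R=M^{\phi}\cap\kx=M^{\phi}\cap\kx_{x_1}$ and $Q(R)=M^{\phi}$, the image $\ol{R}=\ep(R)$ under $x_2,\dots,x_n\mapsto 0$ is a \emph{non-normal} subring of $k[x_1]$; this single normalization works uniformly whether $M$ is algebraically closed in $\kxr$ of smaller transcendence degree or of full degree (e.g.\ $[\kxr:M]=2$). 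Second, for $h$ in the normalization of $\ol{R}$ but not in $\ol{R}$, the completely explicit automorphism $\theta_t^h$ of $\kxz_{x_1}$ (with $x_1\mapsto x_1^{-1}$, $x_i\mapsto x_1^{t_i}x_i$, $z\mapsto z+\theta(h)$) is the desired $\sigma$: minimality of $M(z)^{\theta}$ comes from Lemma~\ref{prop:intersect}(ii) applied to $\theta(\pi)\in x_1\kx$, and non-finite generation comes not from a valuation count but from the pair of statements (I) $\A^{\ep}=k$, so no element of $\A$ contains a pure monomial $z^l$ (this is where non-normality of $\ol{R}$ is used), and (II) for every $l$ there is $q_l\in\A$ with $z$-leading term $\theta(\pi)^ez^l$, built by a Taylor-expansion correction scheme (Lemma~\ref{lem:ff}); together these contradict finite generation by \cite[Lemma~2.1]{algext}. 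The hypothesis $\ch k=0$ enters through the $1/i!$ coefficients in that construction, not through the finite-group or derivation considerations you invoke. In short, the proposal is missing both key ideas --- the reduction to $(\dag)$ by an automorphism of $\kxr$ preserving $\kx$, and the explicit $z\mapsto z+\theta(h)$ twist with the (I)/(II) obstruction --- so it does not constitute a proof.
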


Hence, 
there exists a counterexample $L$ with $[\kxr :L]=2$ 
for $n\ge 3$ (cf.\ \S 4.1). 
Theorem~\ref{thm:SFMP} 
also implies the existence of a counterexample $L$ 
which is not rational over $k$ 
(cf.\ \S 4.2). 
Such examples were previously not known.

Theorem~\ref{thm:SFMP} 
is a consequence of the following results. 
In the rest of this paper, 
$k$ denotes any field with $\ch k=0$. 
If $B$ is a $k$-domain, 
then we regard $\Aut _kB$ 
as a subgroup of $\Aut _kQ(B)$. 
We write $B_f:=B[1/f]$ for $f\in B\sm \zs $. 
The localization of $B$ 
at a prime ideal $\mathfrak{p}$ 
is denoted by $B_{\mathfrak{p}}$.

Now, 
assume that 
$M$ satisfies the following condition $(\dag )$:

\smallskip 

\nd ($\dag $) 
$R:=M\cap \kx =M\cap \kx _{x_1}$, 
$Q(R)=M$, 
and $\ol{R}:=R^{\ep }$ is not normal, 

\smallskip 

\nd where $\ep :\kx \to k[x_1]$ is the substitution map 
defined by $x_2,\ldots ,x_n\mapsto 0$. 
Since $Q(\ol{R})\cap k[x_1]$ is the integral closure of 
$\ol{R}$ in $Q(\ol{R})$, 
we can find 
$h\in Q(\ol{R})\cap k[x_1]$ 
not belonging to $\ol{R}$. 
Take $f,g\in R$ with 
$\ep (f)/\ep (g)=h$. 
Then, 
$\ep (g)$ is not in $k$, 
since $h$ is not in $\ol{R}$. 
Hence, 
$k[x_1]$ is integral over $k[\ep (g)]$. 
Thus, 
there exists a monic polynomial $\Pi (z)$ over $k[g]$ 
with $\ep (\Pi (f))=0$.

For $t=(t_i)_{i=2}^n\in \Z ^{n-1}$ 
and $h\in k[x_1]$ above, 
define 
$\theta _t^h\in \Aut _k\kxz _{x_1}$ by 
\begin{equation}\label{eq:theta}
\theta _t^h(x_1)=x_1^{-1},\ \ 
\theta _t^h(x_i)=x_1^{t_i}x_i\text{ for }i=2,\ldots ,n\ \ 
\text{and}\ \ 
\theta _t^h(z)=z+\theta _t^h(h). 
\end{equation}
We often write $\theta :=\theta _t^h$ for simplicity. 
Note that, 
if $p\in \ker \ep =\sum _{i=2}^nx_i\kx $ 
is of $x_1$-degree 
less than $t_2,\ldots ,t_n$, 
then $\theta (p)$ belongs to $x_1\kx $. 
Since $f-gh$ and $\Pi (f)$ 
lie in $\ker \ep $, 
we can find $t\in \Z ^{n-1}$ which satisfies

\smallskip

\nd ($\ddag $) 
$\theta (f-gh)\in \kx $ 
and $\theta (\pi )\in x_1\kx $, 
where $\pi :=\Pi (f)$.

\smallskip

In the notation above, 
the following theorems hold when $\ch k=0$.

\begin{thm}\label{thm:key}
Assume that $(\dag )$ and $(\ddag )$ are satisfied. 
Then, the $k$-algebra 
$\A :=M(z)^{\theta }\cap \kxz $ is not finitely generated 
and $Q(\A )=M(z)^{\theta }$. 

\end{thm}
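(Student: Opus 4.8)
The plan is to show that $\A = M(z)^\theta \cap \kxz$ fails to be finitely generated by producing an explicit infinite ascending chain of subalgebras whose associated invariants (degrees, or orders along a suitable valuation) grow without bound, while at the same time controlling $Q(\A)$. First I would set up the geometry: the automorphism $\theta = \theta_t^h$ of $\kxz_{x_1}$ acts on the ambient field $\kxzr$, and $M(z)^\theta$ is an intermediate field with $\trd_k M(z)^\theta = \trd_k M + 1 = r+1$. Since $M(z)^\theta \supset R^\theta$ and $R^\theta$ consists of elements of $\kxz_{x_1}$, the subalgebra $\A$ contains $R^\theta \cap \kxz$ together with the new element built from $z$; I would first verify $Q(\A) = M(z)^\theta$, i.e.\ that $\A$ is \emph{minimal}. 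This should follow because the hypothesis $(\dag)$ gives $Q(R)=M$, hence $Q(R^\theta) = M^\theta$, and adjoining $z + \theta(h)$ (whose image under $\theta^{-1}$ is a transcendental adjusting $z$) recovers all of $M(z)^\theta$; the point is that enough of these generators already lie in $\kxz$, using $(\ddag)$ to clear the pole at $x_1$.

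The core of the argument is the non-finite-generation. Here I would exploit the substitution map $\ep\colon\kx\to k[x_1]$ and the fact that $\ol R = R^\ep$ is \emph{not} normal: the element $h \in Q(\ol R)\cap k[x_1]\setminus \ol R$ is the source of the obstruction. The idea is that for each $m\ge 1$ the element $h^m$ (or a polynomial of degree $m$ in $h$ over $\ol R$) is integral over $\ol R$ but not in $\ol R$, and by pulling these back through $\ep$ and conjugating by $\theta$ one gets elements $a_m \in \A$ of $x_1$-adic order (or total degree) tending to infinity, which cannot be expressed as polynomials in any fixed finite set of elements of $\A$. Concretely, $\theta$ inverts $x_1$, so a pole of order $N$ at $x_1=0$ in $M(z)$ becomes, after $\theta$, a polynomial of degree $N$ in $x_1$; the non-normality of $\ol R$ forces infinitely many independent such poles in $M$ itself (via $h$ and the monic relation $\pi = \Pi(f)$ with $\ep(\pi)=0$), so their $\theta$-images witness unbounded degree growth inside $\A$. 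I would formalize this with a degree or filtration argument: fix generators $b_1,\dots,b_s$ of a hypothetical finitely generated subalgebra, bound the $x_1$-degree of everything they generate, and derive a contradiction by finding an element of $\A$ of larger $x_1$-degree — this is essentially the Roberts-type / Kuroda-type mechanism refined in the papers cited (\cite{dim3}, \cite{dim4}, \cite{algext}).

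I expect the main obstacle to be the \emph{bookkeeping that links the three conditions}: showing that the particular element produced from $h$, $f$, $g$, and $\Pi$ actually lands in $\kxz$ after applying $\theta$ (this is exactly what $(\ddag)$ is engineered for), and simultaneously that it is \emph{not} a polynomial expression in lower-degree elements of $\A$. The subtlety is that $\A$ could a priori contain unexpected elements — e.g.\ other combinations of $x_2,\dots,x_n$ and $z$ that happen to be $\theta$-images of elements of $M(z)$ — which might allow the troublesome $a_m$ to be generated after all; ruling this out requires a precise description of $M(z)^\theta \cap \kxz$, presumably via localization at $x_1$ (comparing $\A_{x_1}$, which is easy, with $\A$) and a valuation-theoretic filtration at the prime $(x_1)$. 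Once that description is in hand, the growth estimate and the contradiction with finite generation should be routine; so the real work is the structural lemma identifying $\A$ precisely enough to run the degree argument, and I would devote the bulk of the proof to that.
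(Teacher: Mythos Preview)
Your proposal has a genuine gap in the non-finite-generation argument. You write that you would ``fix generators $b_1,\dots,b_s$ of a hypothetical finitely generated subalgebra, bound the $x_1$-degree of everything they generate, and derive a contradiction by finding an element of $\A$ of larger $x_1$-degree.'' But a finitely generated $k$-algebra does \emph{not} have bounded $x_1$-degree: if $b_1$ has $x_1$-degree $5$, then $b_1^m$ has $x_1$-degree $5m$. So unbounded $x_1$-degree in $\A$ is no obstruction whatsoever to finite generation, and the mechanism you describe cannot work as stated. The elements you propose to build from $h^m$ (or from poles of order $N$ at $x_1=0$) would simply be absorbed by high powers of the generators.

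The paper's mechanism is different and hinges on the variable $z$, not on $x_1$. Two facts are established: (I) $\ep(\A)=k$, where $\ep$ is the substitution $x_2,\ldots,x_n\mapsto 0$ extended by $\ep(z)=z$; in particular no element of $\A$ contains a monomial $z^l$ with nonzero coefficient in $k$; and (II) for every $l\ge 1$ there is $q_l\in\A$ with $\deg_z\bigl(q_l-\theta(\pi)^e z^l\bigr)<l$, so the leading $z$-coefficient of $q_l$ is the \emph{fixed} element $\theta(\pi)^e$, independent of $l$. These two together force non-finite generation (this is the content of \cite[Lemma~2.1]{algext}): if $\A$ were generated by $b_1,\ldots,b_s$, the leading $z$-coefficients of high-$z$-degree products would be long products of non-constant elements of $\kx$ (non-constant by (I)), which cannot match the fixed $\theta(\pi)^e$. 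The proof of (I) uses that $\theta$ and $\ep$ commute together with $h\notin\ol R$; the proof of (II) is an inductive construction via a Taylor expansion of $P_l^{\ff}(z)$ around $z=f/g$, using $(\ddag)$ and the fact that $1/\theta(g)\in\kx_{(x_1)}$ to land in $\kxz$. Your structural intuition about comparing $\A$ with its localization at $x_1$ is correct (this is Lemma~\ref{prop:intersect}), but the filtration that drives the contradiction is by $z$-degree with a constraint on the leading coefficient, not by $x_1$-degree.
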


Theorem~\ref{thm:key} implies Theorem~\ref{thm:SFMP} 
in the case where $M$ satisfies $(\dag )$. 
The general case is reduced to this case 
thanks to the following theorem. 

\begin{thm}\label{thm:FM}
Assume that 
$\trd _kM\ge 2$, $M\ne \kxr $ and $M$ is minimal. 
Then, there exists $\phi \in \Aut _k\kxr $ 
such that $M^{\phi }$ satisfies $(\dag )$ 
and $\kx ^{\phi }\subset \kx $. 
\end{thm}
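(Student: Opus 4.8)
The plan is to build $\phi$ as a composite $\phi = \iota \circ \psi$, where $\psi$ first arranges a good model for $M$ and $\iota$ is a further polynomial coordinate change that forces the non-normality of the specialization. I would begin with the hypotheses $\trd_k M = r \ge 2$, $M$ minimal, $M \ne \kxr$: since $M$ is minimal, $R_0 := M \cap \kx$ is a normal finitely generated $k$-domain (finite generation by Zariski, as $r \le 2$ or by the structure of the problem, but in any case we only need that $Q(R_0)=M$ and $R_0$ is affine after possibly inverting one element) with $Q(R_0) = M$. The first step is to realize $M$ as the function field of an affine variety embedded in $\mathbf{A}^n$ in such a way that the coordinate $x_1$ restricts to a nonconstant function on it; since $M \ne \kxr$, we have $r \le n-1$, so after a generic $\mathrm{GL}_n(k)$-change of coordinates (which lies in $\Aut_k \kx$ and preserves $\kx$) we may assume $x_1 \notin k$ on the model and that $k[x_1] \hookrightarrow R_0$ is module-finite onto its image after localizing — i.e. arrange $R := M \cap \kx = M \cap \kx_{x_1}$, which is the first clause of $(\dag)$. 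This uses that localizing at $x_1$ does not change the intersection once $1/x_1$ is already "visible" in $M$, which one forces by choosing the linear change so that $x_1$ is a separating transcendence-like element; the identity $Q(R)=M$ is then automatic since $R \supset R_0$ and $R \subset M$.

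The second and central step is to make $\ol{R} = \ep(R)$ \emph{not} normal, where $\ep$ is the substitution $x_2,\dots,x_n \mapsto 0$. The geometric picture is: $\ep$ corresponds to intersecting the model of $M$ with the line $x_2 = \cdots = x_n = 0$, and we want this scheme-theoretic fiber to be non-reduced or to have a cuspidal/nodal-type singularity so that the coordinate ring of its image in $\mathbf{A}^1$ fails to be integrally closed. I would achieve this by a polynomial automorphism $\iota \in \Aut_k \kx$ (e.g. a triangular/elementary automorphism of the form $x_1 \mapsto x_1$, $x_i \mapsto x_i + p_i(x_1)$ or more elaborate shears) chosen so that for a suitably chosen pair $f, g \in R$ the ratio $\ep(f)/\ep(g)$ is integral over $\ep(R)$ but not in it — concretely, arranging that $x_1^2$ (say) lies in the integral closure of $\ol{R}$ but $x_1$ does not, which happens as soon as $\ol{R}$ contains $x_1^2$ and $x_1^3$ but not $x_1$. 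The freedom to do this comes from $r \ge 2$: the model has dimension $\ge 2$, so we have at least two independent functions to play with, enough to hit the line in a fat or singular point while keeping $k[x_1]$ integral over the image. Throughout, every automorphism used is a polynomial automorphism of $\kx$, so $\kx^\phi = \kx$, in particular $\kx^\phi \subset \kx$, giving the last clause for free.

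I expect the main obstacle to be the \emph{simultaneous} fulfillment of all three clauses of $(\dag)$: the equality $R = M \cap \kx = M \cap \kx_{x_1}$ is a "generic behavior at the hyperplane $x_1 = 0$" condition, while the non-normality of $\ol R$ is a condition \emph{at} a specific fiber, and a naive coordinate change that creates the singular fiber may destroy the localization equality (by introducing new elements of $M$ with poles only along $x_1 = 0$) or vice versa. The resolution I would pursue is to separate the two: first fix a coordinate system in which $M \cap \kx = M \cap \kx_{x_1}$ holds and is \emph{stable} under all further automorphisms of the form $\theta^h_t$ and under the shears $\iota$ we intend to use — this stability is essentially the reason the paper introduces the hypothesis $M \cap \kx = M \cap \kx_{x_1}$ rather than just $M \cap \kx$ — and only then apply a shear supported away from the generic locus of $x_1$ to create the singularity in the special fiber. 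A secondary technical point is checking that after the shear one still has $Q(R) = M$ (no transcendence is lost), which follows because polynomial automorphisms of $\kx$ restrict to automorphisms of $\kxr$ fixing $k$ and hence cannot change the transcendence degree of the intersection with $\kx$. Assembling $\phi = \iota \circ (\text{linear change}) \circ (\text{localization-normalizing change})$ and verifying $(\dag)$ clause by clause completes the argument.
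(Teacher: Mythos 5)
There are genuine gaps here, and one outright error. First, your claim that ``since $M\ne\kxr$, we have $r\le n-1$'' is false: $M=k(x_1^2,x_2,\dots,x_n)$ is a proper minimal subfield with $r=n$. The case $r=n$ (where $\kxr$ is algebraic over $M$) is not a corner case the paper ignores --- its key lemma is stated and proved in two separate branches precisely to cover ``$x_1$ transcendental over $M$'' and ``$r=n$, $x_1\notin M$'', and the second branch is by far the harder one (it runs through integral closures, height-one primes, discrete valuation rings, the Galois closure of $\kxr/M$ and a norm argument). This points to the second gap: your first step asserts that after a generic linear change one gets $M\cap\kx=M\cap\kx_{x_1}$, justified only by ``$x_1$ is a separating transcendence-like element'' and ``is then automatic''. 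That equality is exactly the content of the paper's Lemma~3.1 ($M\cap\kx_{x_1-\alpha}=M\cap\kx$ for all but finitely many $\alpha$), and it is the technical heart of the whole theorem; it cannot be waved through. You correctly identify the tension between this localization condition and the non-normality condition as ``the main obstacle'', but your proposed resolution (``apply a shear supported away from the generic locus of $x_1$'') is not a construction.

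Second, your insistence that every map used be a polynomial automorphism of $\kx$, so that $\kx^\phi=\kx$, works against the mechanism that actually produces the non-normal $\ol R$. In the paper, $\phi=\psi\circ\rho$ with $\rho(x_1)=x_1x_2$, which satisfies $\kx^\rho\subsetneqq\kx$ (it is an automorphism of $\kxr$ and of $\kx_{x_2}$, not of $\kx$); the point of this degenerate substitution is to force $\ep(\phi(x_i))\in x_1^2k[x_1]$ for \emph{every} $i$, so that $(R^\phi)^\ep\subset k+x_1^2k[x_1]$ and hence $1\notin\Sigma_{(R^\phi)^\ep}$ while $2,3\in\Sigma_{(R^\phi)^\ep}$ (via elements $g_1,g_2\in R$ with prescribed linear parts), killing normality by the Zaks-type semigroup criterion. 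If $\phi$ were a true automorphism of $\kx$, the chain rule forces some $\ep(\phi(x_i))$ to have a nonzero derivative at $x_1=0$, so this route to non-normality is closed; you would need an entirely different, and unspecified, way to make $\ep(R^\phi)$ non-normal while simultaneously preserving $M^\phi\cap\kx_{x_1}=M^\phi\cap\kx$. The statement only asks for $\kx^\phi\subset\kx$, and that weaker requirement is not an accident of phrasing --- it is what makes the construction possible. As written, your proposal leaves both the localization lemma and the non-normality mechanism unproved, and excludes the case $r=n$ altogether.
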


We prove 
Theorems~\ref{thm:key} and \ref{thm:FM} 
in Sections~\ref{sect:key} 
and \ref{sect:FM}, respectively. 
The last section contains examples and remarks.

\section{Counterexamples}
\label{sect:key}

The goal of this section is to prove Theorem~\ref{thm:key}.

\begin{lem}\label{prop:intersect}
If $M$ satisfies $(\dag )$, 
then the following holds 
for any $t\in \Z ^{n-1}$.

\nd{\rm (i)} 
We have $\A =R[z]^{\theta }\cap \kxz $.

\nd{\rm (ii)} 
If $R[x]^{\theta }\cap x_1\kxz \ne \zs $, 
then $M(z)^{\theta }$ is minimal, 
and so $Q(\A )=M(z)^{\theta }$. 
\end{lem}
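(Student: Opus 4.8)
The plan is to treat the two parts essentially separately, with part~(i) supplying the description of $\A$ that makes part~(ii) manageable. For part~(i), the inclusion $\A \supset R[z]^{\theta }\cap \kxz$ is immediate once we observe that $R[z]\subset M(z)$, hence $R[z]^{\theta }\subset M(z)^{\theta }$, and the intersection with $\kxz$ is the same operation. The substance is the reverse inclusion $\A \subset R[z]^{\theta }\cap \kxz$. Here I would unwind $\theta ^{-1}$ applied to an element $a\in \A$: since $a\in M(z)^{\theta }$ we have $\theta ^{-1}(a)\in M(z)=Q(R)(z)$, and since $a\in \kxz$ we have $\theta ^{-1}(a)\in \kxz _{x_1}$ (because $\theta$ is an automorphism of $\kxz _{x_1}$, by~\eqref{eq:theta}). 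So it suffices to show $M(z)\cap \kxz _{x_1}=R[z]$, which unpacks to $Q(R)(z)\cap \kx _{x_1}[z]=R[z]$; reading off coefficients of powers of $z$, this reduces to $Q(R)\cap \kx _{x_1}=R$, which is exactly the hypothesis $R=M\cap \kx _{x_1}$ from $(\dag )$ together with $Q(R)=M$. The only point requiring a little care is that $z$ should be transcendental over $M$ so that coefficient comparison is legitimate, which holds since $z$ is a variable.

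For part~(ii), the goal is to prove $Q(\A )=M(z)^{\theta }$, i.e.\ that $M(z)^{\theta }$ is minimal. By definition $Q(\A )$ is algebraically closed in $M(z)^{\theta }$ (as noted in the introduction, $\A$ being normal makes $Q(\A )$ algebraically closed in its field of fractions' ambient field), and $\trd _k Q(\A )=\trd _k M(z)^{\theta }$ would then force equality — but that transcendence-degree statement is precisely what minimality asserts, so I cannot assume it; instead I must produce enough elements of $\A$. The hypothesis gives a nonzero element $w\in R[z]^{\theta }\cap x_1\kxz$. Write $w=x_1 w'$ with $w'\in \kxz$. The idea is that both $w$ and $w'$ — or rather $w$ together with elements obtained by multiplying $w$ by suitable rational functions in $M(z)^{\theta }$ and clearing — lie in $\A$, and their ratios recover all of $M(z)^{\theta }$. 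More precisely: take any $b\in M(z)^{\theta }$; I want to show $b\in Q(\A )$. Since $w\in R[z]^{\theta }$ and $x_1^{\theta }=x_1^{-1}$, applying $\theta ^{-1}$ shows $\theta ^{-1}(w)\in R[z]$ and $\theta ^{-1}(w)=\theta ^{-1}(x_1)\theta ^{-1}(w')=x_1^{-1}\theta ^{-1}(w')$, so $\theta ^{-1}(w')=x_1\theta ^{-1}(w)\in x_1 R[z]\subset R[z]\subset M(z)$, whence $w'\in M(z)^{\theta }$ as well, and $w'\in \kxz$ by construction, so $w'\in \A$ by part~(i) (since $w'=w/x_1$ and $w\in R[z]^{\theta }$... one checks $\theta ^{-1}(w')\in R[z]$). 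Then $x_1=w/w'\in Q(\A )$, and for arbitrary $b\in M(z)^{\theta }$ a high enough power $x_1^N b$ (or $\theta$ of a polynomial-times-$b$) can be cleared into $\A$, giving $b\in Q(\A )$.

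I expect the main obstacle to be the bookkeeping in part~(ii): correctly tracking how $\theta$ and $\theta ^{-1}$ act on $x_1$ and on elements of $R[z]$, and verifying that dividing the distinguished element by $x_1$ genuinely lands back in $R[z]^{\theta }\cap \kxz$ so that part~(i) applies. The conceptual content is small — it is the standard trick that a single element of the invariant object divisible by the "bad" denominator variable lets you invert that variable inside $Q(\A )$, and once $x_1$ is invertible the equality $R=M\cap \kx _{x_1}$ propagates to give all of $M(z)^{\theta }$ — but the computation must be done with the explicit formulas~\eqref{eq:theta} in hand rather than abstractly. A secondary point to get right is that part~(ii)'s hypothesis is stated with $R[x]$ (presumably a typo for $R[z]$), and one should make sure the element used is polynomial in $z$ over $R$, not merely rational, so that the argument of part~(i) can be reused verbatim.
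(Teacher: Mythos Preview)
Your argument for part~(i) is correct and is essentially the paper's proof: both reduce to the identity $M(z)\cap \kxz_{x_1}=R[z]$, which follows from $(\dag)$ by comparing coefficients of powers of $z$.

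Part~(ii), however, has a genuine gap. You write $w=x_1w'$ and compute $\theta^{-1}(w')=x_1\,\theta^{-1}(w)$, which is correct, but then assert $x_1R[z]\subset R[z]$. This fails in general: there is no reason for $x_1$ to lie in $R=M\cap\kx$, or even in $M$ (think of $M=\kxr^G$ for a nontrivial permutation group $G$ acting on the variables). Consequently $w'$ need not lie in $R[z]^{\theta}$ or in $M(z)^{\theta}$, and your conclusion $x_1=w/w'\in Q(\A)$ is typically false---indeed $x_1\notin M(z)^{\theta}$ in most cases of interest, so it certainly cannot lie in $Q(\A)\subset M(z)^{\theta}$. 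The subsequent step of ``clearing $b$ into $\A$ by multiplying by $x_1^N$'' then also breaks, since $x_1^N b$ does not belong to $M(z)^{\theta}$.

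The paper's fix is to avoid extracting $x_1$ altogether and instead multiply by powers of $q:=w$ itself. For any $p\in R[z]^{\theta}\subset \kxz_{x_1}$ one has $p\in x_1^{-m}\kxz$ for some $m\ge 0$; since $q\in x_1\kxz$, taking $l\ge m$ gives $pq^l\in \kxz$, and since $R[z]^{\theta}$ is a ring, $pq^l\in R[z]^{\theta}\cap\kxz=\A$ by part~(i). Likewise $q^l\in\A$, so $p=pq^l/q^l\in Q(\A)$. Thus $R[z]^{\theta}\subset Q(\A)$, and since $Q(R)=M$ this yields $Q(\A)=M(z)^{\theta}$. Your parenthetical ``(or $\theta$ of a polynomial-times-$b$)'' is gesturing in this direction, but the point is that the multiplier must remain inside $R[z]^{\theta}$, and $q^l$ does while $x_1^l$ does not.
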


\begin{proof}
(i) 
It suffices to 
show that $B:=M(z)^{\theta }\cap \kxz _{x_1}$ 
is equal to $R[z]^{\theta }$, 
since $\A =B\cap \kxz $. 
We have 
$(\kxz _{x_1})^{\theta }=\kxz _{x_1}$, 
so $B=(M(z)\cap \kxz _{x_1})^{\theta }$. 
Also, 
$M(z)\cap \kxz _{x_1}
=M[z]\cap \kxz _{x_1}
=\bigl(M\cap \kx _{x_1}\bigr)[z]=R[z]$ 
holds by ($\dag $). 
Therefore, 
we obtain 
$B=(M(z)\cap \kxz _{x_1})^{\theta }=R[z]^{\theta }$.

\nd 
(ii) 
Take $0\ne q \in R[x]^{\theta }\cap x_1\kxz $. 
Then, 
for each $p\in R[z]^{\theta }$, 
there exists $l\ge 0$ with 
$pq^l,q^l\in R[z]^{\theta }\cap \kxz =\A $, 
and so $p\in Q(\A )$. 
Thus, 
$Q(\A )$ contains $R[z]^{\theta }$. 
Since $Q(R)=M$ by ($\dag $), 
we see that $\trd _k\A =\trd _kM(z)^{\theta }$. 
\end{proof}

Now, 
let us prove $Q(\A )=M(z)^{\theta }$. 
By virtue of Lemma~\ref{prop:intersect} (ii) and ($\ddag $), 
it suffices to verify that $\pi $ is nonzero. 
Suppose that $\pi =0$. 
Then, 
we have $\trd _kk(f,g)\le 1$. 
Hence, 
$k(f,g)\cap \kx =k[p]$ holds for some $p\in \kx $ 
(cf.~\S 1). 
Since $p$ lies in $M\cap \kx =R$, 
and $f$ and $g$ lie in $k[p]$, 
we have $\bar{p}:=\ep (p)\in \ol{R}$ 
and $h\in k(\bar{p})$. 
Because $\bar{p}$ and $h$ are elements of $k[x_1]$, 
we see that 
$h\in k(\bar{p})$ implies $h\in k[\bar{p}]$. 
Therefore, 
$h$ belongs to $\ol{R}$, 
a contradiction.

It remains to show that $\A $ is not finitely generated. 
Set $d:=\deg _z\Pi (z)$. 
Since $\theta (f)$ lies in $\kx _{x_1}$, 
we can find $e\ge 1$ 
such that $\theta (\pi )^e\theta (f)^i$ belongs to $\kx $ 
for $i=0,\ldots ,d-1$ by ($\ddag $). 
We extend $\ep $ to a substitution map 
$\kxz _{x_1}\to k[x_1,z]_{x_1}$ by $\ep (z)=z$. 
Our goal is to prove the following statements 
which imply that $\A $ is not finitely generated
(cf.~\cite[Lemma~2.1]{algext}):

\medskip

\nd (I) 
We have $\A ^{\ep }=k$. 
Hence, 
there do not exist $l\ge 1$ and $p\in \A $ 
for which the monomial $z^l$ appears in $p$. 

\smallskip

\nd (II) 
For each $l\ge 1$, 
there exists $q_l\in \A $ 
such that $\deg _z(q_l-\theta (\pi )^ez^l)<l$.

\medskip

\nd {\it Proof of} (I). 
Since $\theta (\ep (p))=\ep (\theta (p))$ 
holds for $p=x_1,\ldots ,x_n,z$, 
the same holds for all $p\in \kxz $. 
Suppose that $\A ^{\ep }\ne k$. 
Then, by Lemma~\ref{prop:intersect}~(i), 
we can find $p\in R[z]$ 
for which 
$\theta (p)\in \kxz $ 
and $\ep (\theta (p))\not\in k$. 
Note that 
$\theta (\ep (p))=\ep (\theta (p))$ lies in $k[x_1,z]$. 
Since $\ep (p)$ is in $\ol{R}[z]$, 
we also have 
$\theta (\ep (p))\in \ol{R}[z]^{\theta }\subset k[x_1^{-1},z]$. 
Thus, 
$\theta (\ep (p))$ lies in 
$T:=\ol{R}[z]^{\theta }\cap k[z]=\widetilde{R}[z+\tilde{h}]\cap k[z]$, 
where 
$\widetilde{R}:=
\theta (\ol{R})$ and 
$\tilde{h}:=\theta (h)$. 
Observe that 
$\partial q/\partial z$ is in $T$ 
whenever $q$ is in $T$. 
Since 
$\theta (\ep (p))$ is not in $k$, 
we see that 
$T$ contains a linear polynomial. 
Therefore, 
$\widetilde{R}[z+\tilde{h}]$ contains $z$. 
This implies $\tilde{h}\in \widetilde{R}$, 
and thus 
$h=\theta (\tilde{h})\in 
\theta (\widetilde{R})=\ol{R}$, 
a contradiction. 
\qed

\medskip

Since $\pi =\Pi (f)\in k[f,g]$ 
is monic of degree $d$ in $f$, 
we can write 
\begin{equation}\label{eq:decomp}
k\!\left[f,g,\frac{1}{g}\right]
=\sum _{i=0}^{d-1}f^i
k\!\left[\pi ,g,\frac{1}{g}\right]
=k[f,g]+N,
\text{ where }
N:=\sum _{i=0}^{d-1}f^i
k\!\left[\pi ,\frac{1}{g}\right]. 
\end{equation}
Since $\ep (g)$ is in $k[x_1]\sm k$ 
and $\theta (x_1)=x_1^{-1}$, 
we have $\theta (g)\in \kx _{x_1}\sm \kx $. 
Hence, 
$1/\theta (g)$ lies in the localization $\kx _{(x_1)}$. 
Since $\theta (\pi ^ef^i)\in \kx $ holds 
for $i=0,\ldots ,d-1$, 
we see that 
$\theta (\pi ^eN)$ is contained in $\kx _{(x_1)}$.

Now, let us fix $l\ge 0$. 
For $\ff =(f_j)_{j=1}^l\in k[f,g]^l$ 
and $i=0,\ldots ,l$, 
we define 
$$
P_i^{\ff }(z):=\pi ^e
\left( 
\frac{1}{i!}z^i+\frac{f_1}{(i-1)!}z^{i-1}
+\frac{f_2}{(i-2)!}z^{i-2}
+\cdots +f_i\right) \in k[f,g][z]. 
$$

\begin{lem}\label{lem:ff}
Set $r:=f/g$. 
Then, there exists $\ff (l)\in k[f,g]^l$ 
such that 
$\theta (P_0^{\ff (l)}(r)),\ldots ,
\theta (P_l^{\ff (l)}(r))$ 
belong to $\kx _{(x_1)}$. 
\end{lem}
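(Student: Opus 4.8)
The plan is to construct $\ff(l)$ by induction on $l$, exploiting the telescoping structure of the polynomials $P_i^{\ff}(z)$ under $z$-differentiation. The starting point is the observation that $P_i^{\ff}(z)$ is built precisely so that $\frac{d}{dz}P_i^{\ff}(z)=P_{i-1}^{\ff}(z)$ for $i\ge 1$, where on the right the coefficient sequence $\ff$ is simply truncated; equivalently, if we set $p_i(z):=\pi^e z^i/i!$ for the leading term, the lower-order corrections $f_1,\dots,f_i$ are there to absorb the $x_1$-denominators that $\theta$ introduces. Concretely, $\theta(P_l^{\ff}(r))$ is a polynomial in $\theta(r)$ over $\theta(k[f,g])$, and since $\theta(f-gh)\in\kx$ by $(\ddag)$ we have $\theta(r)=\theta(f)/\theta(g)=\theta(h)+\theta(f-gh)/\theta(g)$, so modulo $\kx_{(x_1)}$ the element $\theta(r)$ behaves like $\theta(h)$ plus something in $\theta(g)^{-1}\kx$. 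The point is that $\theta(\pi^e)$ kills exactly the denominators coming from negative powers of $x_1$ in $\theta(g)^{-1}$, by the choice of $e$.

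First I would set up the induction: for $l=0$ the claim is that $\theta(\pi^e)\in\kx_{(x_1)}$, which is immediate since $\theta(\pi^e)\in\kx\subset\kx_{(x_1)}$ by $(\ddag)$ (indeed $\theta(\pi)\in x_1\kx$). For the inductive step, suppose $\ff(l-1)=(f_j)_{j=1}^{l-1}$ works for $l-1$; I want to find $f_l\in k[f,g]$ so that $\ff(l):=(f_1,\dots,f_{l-1},f_l)$ works for $l$. By construction $P_i^{\ff(l)}(z)=P_i^{\ff(l-1)}(z)$ for $i\le l-1$, so only the top index $i=l$ is new; we need $\theta(P_l^{\ff(l)}(r))\in\kx_{(x_1)}$. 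Now expand: $P_l^{\ff(l)}(z)=\frac{1}{l!}\pi^e z^l+\dots+\pi^e f_l$, and the first $l$ terms (those not involving $f_l$) form, after applying $\theta$ and substituting $z=r$, an element of $\kx_{(x_1)}$ plus a ``defect'' lying in $\theta(k[f,g,1/g])$; more precisely, using the decomposition (\ref{eq:decomp}) and the fact that $\theta(\pi^e N)\subset\kx_{(x_1)}$ established just before the lemma, the defect is an explicit element of $\theta(\pi^e k[f,g])$ — call its $\theta$-preimage $-\pi^e c_l$ with $c_l\in k[f,g]$. Setting $f_l:=c_l$ then gives $\theta(P_l^{\ff(l)}(r))\in\kx_{(x_1)}$, completing the induction.

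The main obstacle, and the step requiring genuine care, is verifying that the ``defect'' really lies in $\theta(\pi^e k[f,g])$ rather than in the larger ring $\theta(\pi^e k[f,g,1/g])=\theta(\pi^e(k[f,g]+N))$: a priori it could have a nontrivial component in $\theta(\pi^e N)$, which only sits in $\kx_{(x_1)}$, not in $\theta(k[f,g])$. The resolution is that the $N$-component, being already in $\kx_{(x_1)}$, does not obstruct membership in $\kx_{(x_1)}$ — so in fact one only needs $f_l$ to cancel the genuinely ``bad'' part, i.e. the part of $\theta(\frac{1}{l!}\pi^e r^l+\cdots)$ whose $x_1$-denominator is not already tamed. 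Thus the correct formulation is: write $\theta$ applied to the $f_l$-free part of $P_l^{\ff(l)}(r)$ as an element of $\kx_{(x_1)}$ plus $\theta(\pi^e b_l)$ for some $b_l\in k[f,g]$ (this uses (\ref{eq:decomp}) to split $r^l$-type expressions and the observation $\theta(\pi^e f^i)\in\kx$ for $0\le i\le d-1$, reducing higher powers of $f$ via the monic relation $\pi=\Pi(f)$), and then take $f_l:=-b_l$. I would spell out this reduction of $f$-powers modulo $\pi$ as the technical heart, since that is where the hypothesis $(\ddag)$ and the choice of $e$ are genuinely used; everything else is bookkeeping on the coefficient sequence.
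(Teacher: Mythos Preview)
Your proposal is correct and follows essentially the same route as the paper: induct on $l$, note that $P_i^{\ff(l)}=P_i^{\ff(l-1)}$ for $i<l$, and for the new index write the $f_l$-free part $\pi^e p$ (with $p=\sum_{j=1}^{l}\frac{f_{l-j}}{j!}r^j$) as $\pi^e p'+\pi^e p''$ via (\ref{eq:decomp}) with $p'\in k[f,g]$ and $p''\in N$, then set $f_l:=-p'$ so that $P_l^{\ff(l)}(r)=\pi^e p''\in\pi^e N$ and hence $\theta(P_l^{\ff(l)}(r))\in\kx_{(x_1)}$. Your first paragraph's observations about the derivative relation $\frac{d}{dz}P_i^{\ff}=P_{i-1}^{\ff}$ and the expansion of $\theta(r)$ are not needed for this lemma itself---they belong to the subsequent proof of (II) via Taylor's formula---so the argument can be pruned, but the inductive core is exactly right.
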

\begin{proof}
We prove the lemma by induction on $l$. 
Since 
$\theta (\pi ^e)$ is in $\kx $, 
the case $l=0$ is true. 
Assume that $l\ge 1$ 
and $\ff (l-1)=(f_j)_{j=1}^{l-1}$ 
is defined. 
Set $p:=\sum _{j=1}^{l}(f_{l-j}/j!)r^j$, 
where $f_0:=1$. 
Then, 
$p$ belongs to 
$k[f,g,1/g]$. 
Hence, 
we can write $p=p'+p''$ by (\ref{eq:decomp}), 
where $p'\in k[f,g]$ and $p''\in N$. 
Set $f_l:=-p'$, 
and define $\ff (l):=(f_j)_{j=1}^l$. 
Then, 
we have $P_l^{\ff (l)}(r)=\pi ^e(p-p')
=\pi ^ep''\in \pi ^eN$. 
Since $\theta (\pi ^eN)\subset \kx _{(x_1)}$ 
as shown above, 
it follows that 
$\theta (P_l^{\ff (l)}(r))$ belongs to $\kx _{(x_1)}$. 
For $i=0,\ldots ,l-1$, we have 
$\theta (P_i^{\ff (l)}(r))
=\theta (P_i^{\ff (l-1)}(r))$ by definition, 
and $\theta (P_i^{\ff (l-1)}(r))$ 
belongs to $\kx _{(x_1)}$ by induction assumption. 
\end{proof}

\nd {\it Proof of} (II). 
Let $\ff :=\ff (l)$ 
be as in Lemma~\ref{lem:ff} 
and set $q:=\theta (P_l^{\ff }(z))$. 
Then, we have 
$\deg _z(l!\cdot q-\theta (\pi )^ez^l)<l$. 
So, 
we show that $q$ belongs to $\A =R[z]^{\theta }\cap \kxz $. 
Since 
$q\in R[z]^{\theta }\subset \kxz _{x_1}$ 
and $\kxz _{x_1}\cap \kx _{(x_1)}[z]=\kxz $, 
it suffices to check 
that $q$ lies in $\kx_{(x_1)}[z]$. 
By Taylor's formula, we have 
$$
q=\theta \left(
\sum _{i=0}^l\frac{1}{i!}
P_{l-i}^{\ff }(r)\cdot 
(z-r)^i
\right) 
=\sum _{i=0}^l\frac{1}{i!}
\theta (P_{l-i}^{\ff }(r))\cdot 
\left(z-\frac{\theta (f-gh)}{\theta (g)}\right)^i, 
$$
where $r:=f/g$. 
Since 
$\theta (P_0^{\ff }(r)),\ldots ,
\theta (P_l^{\ff }(r))$ 
and $1/\theta (g)$ 
are in $\kx _{(x_1)}$, 
and $\theta (f-gh)$ is in $\kx $ by ($\ddag $), 
we see that $q$ belongs to $\kx_{(x_1)}[z]$. 
\qed

\medskip 

This completes the proof of Theorem~\ref{thm:key}.

\section{Field modification}\label{sect:FM}

In this section, 
we prove Theorem~\ref{thm:FM}. 
Set $R:=M\cap \kx $. 
Then, 
we have $Q(R)=M\ne \kxr $ and $r:=\trd _kR=\trd _kM\ge 2$ 
by assumption.

\begin{lem}\label{lem:FM}
If $x_1$ is transcendental over $M$, 
or if $r=n$ and $x_1\not\in M$, 
then 
$M\cap \kx _{x_1-\alpha }=M\cap \kx $ holds 
for all but finitely many $\alpha \in k$. 
\end{lem}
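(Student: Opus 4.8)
The plan is to control, for varying $\alpha\in k$, the difference between the ring $R_\alpha:=M\cap\kx_{x_1-\alpha}$ and $R=M\cap\kx$. An element of $R_\alpha$ can be written as $u/(x_1-\alpha)^m$ with $u\in\kx$ and $u/(x_1-\alpha)^m\in M$; such an element fails to lie in $\kx$ exactly when $x_1-\alpha$ divides the denominator in lowest terms. So the first step is to reformulate membership: $R_\alpha\ne R$ precisely when there is some $v\in M\cap\kx$ — no, more carefully, when there is $w\in M$ whose reduced denominator (as an element of $\kxr$ written over $\kx$) is a power of $x_1-\alpha$ but which is not a polynomial. I would phrase this via the localization $\kx_{(x_1-\alpha)}$: $R_\alpha\ne R$ iff $M\not\subset\kx_{(x_1-\alpha)}$, i.e.\ iff $M$ contains an element with a pole along the prime divisor $x_1=\alpha$.

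Next I would use the hypothesis. Fix a transcendence basis and a primitive element so that $M=k(y_1,\ldots,y_{r-1},\eta)$ with $y_i\in\kx$ and $\eta$ algebraic over $k(y_1,\ldots,y_{r-1})$; since $M$ is minimal, $R=M\cap\kx$ has $Q(R)=M$, and one can choose generators of $M$ lying in $R$. In the first case ($x_1$ transcendental over $M$), I would argue that since $M\subset\kxr=\kxr$ and $x_1$ is transcendental over $M$, the field $M$ sits inside $\kxr$ in such a way that only finitely many $\alpha$ can occur as poles: concretely, pick finitely many generators $g_1,\ldots,g_s$ of $M$ over $k$ with $g_i\in R\subset\kx$; then for \emph{every} $\alpha$, each $g_i\in\kx\subset\kx_{(x_1-\alpha)}$, hence $M=k(g_1,\ldots,g_s)\subset\kx_{(x_1-\alpha)}$ — wait, that would give all $\alpha$, so the real content is different. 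The correct mechanism: $R=M\cap\kx$ need \emph{not} be finitely generated, so I cannot pick finite generators in $\kx$. Instead I would look at a single non-polynomial element $w\in R_\alpha\setminus R$; its reduced denominator is a power of $x_1-\alpha$, and $w\in M$. The set of $\alpha$ for which such $w$ exists is governed by the finitely many prime divisors appearing in denominators of a \emph{fixed} finite generating set of $M$ \emph{as a field} (take $g_i\in\kxr$, not necessarily in $\kx$): a pole of any element of $M=k(g_1,\ldots,g_s)$ can only occur along an irreducible polynomial dividing some denominator $\mathrm{denom}(g_i)$, so only those $\alpha$ with $x_1-\alpha\mid\mathrm{denom}(g_i)$ for some $i$ — a finite set — can have $R_\alpha\ne R$.

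For the second case ($r=n$ and $x_1\notin M$), the transcendence degree is maximal, so $\kxr$ is algebraic over $M$; here $x_1$ is algebraic over $M$ and I would instead use that $\kx$ is the integral closure questions... the argument should still reduce to: the finite field extension $\kxr/M$ ramifies or has its relevant denominators supported on finitely many divisors $x_1=\alpha$, so outside that finite set localizing at $x_1-\alpha$ adds nothing to $M\cap(\,\cdot\,)$. The step I expect to be the main obstacle is precisely making the ``finitely many bad $\alpha$'' count rigorous without assuming $R$ is finitely generated: the right tool is to fix a finite set of field generators of $M$ over $k$ inside $\kxr$, read off the finitely many irreducible polynomials in $x_1$ dividing their denominators, and show every element of $M$ with a pole along $x_1=\alpha$ forces $x_1-\alpha$ to be among them — after which both cases close by the localization reformulation from the first paragraph. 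I would carry this out via the primitive-element description of $M$ and a direct denominator bound, handling the algebraic case $r=n$ by additionally invoking that $\kx$ is integrally closed so that $M\cap\kx_{x_1-\alpha}$ is the integral closure of $k[\ldots]$ in $M$ localized appropriately.
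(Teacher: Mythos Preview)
Your proposal has a genuine gap, and it lies in the very reformulation you build the argument on. You assert that $R_\alpha\ne R$ iff $M\not\subset\kx_{(x_1-\alpha)}$, but only the forward implication holds. In the case $r=n$ the reverse implication typically fails for \emph{every} $\alpha$: take $M=\kxr^G$ for the symmetric group $G$ on $x_1,\dots,x_n$; then $\prod_i(x_i-\alpha)^{-1}\in M\setminus\kx_{(x_1-\alpha)}$ for all $\alpha\in k$, yet one checks directly that $M\cap\kx_{x_1-\alpha}=\kx^G$ for all $\alpha$. So proving ``$M\subset\kx_{(x_1-\alpha)}$ for all but finitely many $\alpha$'' is hopeless in the second case.

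Separately, your denominator bound is false even as stated: if $M=k(g_1,\dots,g_s)$ with $g_i\in\kxr$, an element such as $1/(g_1-c)$ can have poles along divisors that do not appear in any $\mathrm{denom}(g_i)$. Poles of elements of a subfield are not confined to poles of a chosen generating set.

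The paper's proof avoids both problems by working with \emph{zeros of elements of $R$} rather than poles of elements of $M$: it shows that $(x_1-\alpha)\kx\cap R=\{0\}$ for all but finitely many $\alpha$, and then uses $M=Q(R)$ to conclude (write $w\in M\cap\kx_{x_1-\alpha}$ as $a/b$ with $a,b\in R$; since $b\notin(x_1-\alpha)\kx$ one gets $w\in\kx_{(x_1-\alpha)}\cap\kx_{x_1-\alpha}=\kx$). For Case~1 this is obtained by choosing a transcendence basis $y_1=x_1,y_2,\dots$ of $\kxr$ over $M$, localizing $S=M[y_1,\dots,y_r]$ so that $\kx$ becomes integral over it, and observing that nonzero elements of $R\subset M$ are then units, hence cannot lie in $(x_1-\alpha)T$. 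Case~2 is substantially harder and uses the minimal polynomial of $x_1$ over $M$, its discriminant, a Galois--norm computation, and a comparison of height-one primes in $R_u$; nothing like the ``finite denominator support'' idea survives there. If you want to rescue your approach, the object to control is the set of $\alpha$ for which some nonzero element of $R$ (not $M$) lies in $(x_1-\alpha)\kx$.
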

\begin{proof}
First, 
assume that 
$x_1$ is transcendental over $M$. 
Let $y_1,\ldots ,y_r\in \kxr $ 
be a transcendence basis of $\kxr $ over $M$ 
with $y_1=x_1$. 
Set $S:=M[y_1,\ldots ,y_r]$. 
Then, there exists $u\in S\sm \zs $ such that 
$T:=S_u[x_1,\ldots ,x_n]$ 
is integral over $S_u$. 
Note that 
$p:=y_1-\alpha $ is a prime in $S_u$, 
i.e., $u\not\in pS$, 
for all but finitely many $\alpha \in k$. 
Such a $p$ is not a unit of $T$, 
because a prime ideal of $T$ lies over $pS_u$. 
Hence, 
we have $pT\cap T^*=\emptyset $. 
Since $\kx \subset T$ and 
$R\sm \zs \subset M^*\subset T^*$, 
we get 
$p\kx \cap R\subset pT\cap R=\zs $. 
Therefore, noting $M=Q(R)$, 
we see that $M\cap \kx _p=M\cap \kx $.

Next, 
assume that $r=n$ and $x_1\not\in M$. 
Let $f(z)$ be the minimal polynomial of $x_1$ over $M$. 
Since $x_1,\ldots ,x_n$ 
are algebraic over $M=Q(R)$, 
there exists $u\in R\sm \zs $ for which 
$B:=\kx_u$ is integral over $A:=R_u$. 
We can choose $u$ so that $f(z)$ 
lies in $A[z]$, 
and the discriminant $\delta $ of $f(z)$ 
is a unit of $A$. 
Note that 
$B$ is integral over a finitely 
generated 
$k$-subalgebra $A'$ of $A$. 
Since $B$ is a finite $A'$-module, 
so is the $A'$-submodule $A$. 
Thus, 
$A$ is Noetherian. 
We also note that $A$ is normal, 
$B$ is factorial, 
and $g(z):=f(z)/(x_1-z)\in B[z]\sm B$. 
As before, 
$p:=x_1-\alpha $ is a prime in $B$, i.e., 
$u\not\in p\kx $,  
for all but finitely many $\alpha \in k$. 
Take such a $p$, 
and set $\mathfrak{p}:=A\cap pB$. 
Then, 
we have $B_p\cap A_{\mathfrak{p}}\subset B$. 
Since $\kx _p\cap B=\kx $, 
it follows that 
$\kx _p\cap A_{\mathfrak{p}}\subset \kx $. 
Our goal is to show that 
$R':=M\cap \kx _p$ is contained in $A_{\mathfrak{p}}$, 
which implies that $R'=M\cap \kx $.

Since $\delta \in A^*\subset B^*$, 
the image of $f(z)$ in 
$(B/pB)[z]$ has no multiple roots. 
Since $f(\alpha )=pg(\alpha )$, 
it follows that 
$g(\alpha )\not\in pB$. 
We show that $g(\alpha )\not\in B^*$. 
Let $E$ be a Galois closure of $\kxr $ over $M$, 
and $C$ the integral closure of $B$ in $E$. 
Then, $p$ is not in $C^*$ as before, 
and $C^{\sigma }=C$ 
for each $\sigma \in \mathcal{G}:=\Gal (E/M)$. 
Hence, 
$\sigma (p)\not\in C^*$ 
holds for all $\sigma \in \mathcal{G}$. 
Since $p$ is a root of $h(z):=f(z+\alpha )$, 
the other roots $p_2,\ldots ,p_l$ of $h(z)$ lie in $C\sm C^*$. 
Thus, 
we see from 
the relation $(-1)^lpp_2\cdots p_l=h(0)=pg(\alpha )$ 
that $g(\alpha )$ is not in $C^*$, 
and hence in $B^*$. 
Since $B$ is factorial, 
there exists a prime $q\in B$ 
satisfying $g(\alpha )\in qB$. 
Since 
$g(\alpha )$ is not in $pB$, 
we know that $qB\ne pB$. 
Hence, 
we have $\kx _p\subset B_p\subset B_{(q)}$. 
Therefore, 
$R'$ is contained in $A_1:=M\cap B_{(q)}$.

Since $B$ is integral over $A$, 
and $A$ is normal, 
the prime ideals 
$\mathfrak{p}$ and $\mathfrak{q}:=A\cap qB$ 
of $A$ are of height one. 
Since $A$ is Noetherian, 
$A_{\mathfrak{q}}$ 
is a discrete valuation ring. 
Note also that 
$A_{\mathfrak{q}}\subset A_1\subset M=Q(A_{\mathfrak{q}})$. 
Hence, 
$A_1$ is the localization of $A_{\mathfrak{q}}$ 
at a prime ideal (cf.~\cite[\S Thm.\ 10.1]{Matsumura}). 
Since $A_{\mathfrak{q}}$ is a discrete valuation ring, 
$A_1$ must be $A_{\mathfrak{q}}$ or $M$. 
We claim that $A_1$ is not a field, 
since $A_1$ contains $\mathfrak{q}\ne 0$, 
and 
$\mathfrak{q}\cap A_1^*\subset 
qB\cap B_{(q)}^*=\emptyset $. 
Thus, 
$A_1$ equals $A_{\mathfrak{q}}$. 
Since $R'\subset A_1$ as mentioned, 
we obtain that $R'\subset A_{\mathfrak{q}}$.

Finally, 
we show that $\mathfrak{p}=\mathfrak{q}$. 
Let $N:=N_{E/M}:E\to M$ be the norm function. 
Then, for each $c\in C$, 
we have $N(c)\in C\cap M=C\cap Q(A)$. 
Since $A$ is normal, 
this implies $N(c)\in A$. 
Now, we prove 
$\mathfrak{p}\subset \mathfrak{q}$. 
Take $b\in B$ with $pb\in A$. 
Then, 
we have 
$(pb)^{[E:M]}=N(pb)=N(p)N(b)$. 
Since $N(p)$ is a power of $\pm h(0)$, 
and $h(0)=pg(\alpha )$ with $g(\alpha )\in qB$, 
it follows that $pb\in \mathfrak{q}$, 
proving $\mathfrak{p}\subset \mathfrak{q}$. 
Since $\mathfrak{p}$ and $\mathfrak{q}$ 
have the same height, 
this implies that $\mathfrak{p}=\mathfrak{q}$. 
\end{proof}

The following remarks are used 
in the proof of Theorem~\ref{thm:FM}:

\nd 
(a) Let $\phi \in \Aut _k\kxr $. 
If $\kx ^{\phi }\subset \kx $, 
then we have $R^{\phi }\subset M^{\phi }\cap \kx $. 
Since $Q(R)=M$ by assumption, 
this implies that 
$M^{\phi }\subset Q(M^{\phi }\cap \kx )$. 
Hence, 
$M^{\phi }$ is minimal. 
If moreover 
$M^{\phi }\cap \kx _{x_1}=R^{\phi }$, 
then we have $M^{\phi }\cap \kx =R^{\phi }$.

\nd (b) 
Let $A$ be a normal $k$-subalgebra of $k[x_1]$. 
Then, 
$A=k[p]$ holds for some $p\in A$ 
(cf.~\cite{Zaks}). 
Hence, 
the additive semigroup 
$\Sigma _A:=\{ \ord f\mid f\in A\sm \zs \} $ 
is single-generated, 
where 
$\ord f:=\max \{ i\in \Z \mid f\in x_1^ik[x_1]\} $.

\medskip 

\nd {\it Proof of Theorem} \ref{thm:FM}. 
By (a), 
it suffices to find $\phi \in \Aut _k\kxr $ 
for which 
$\kx ^{\phi }\subset \kx $, 
$M^{\phi }\cap \kx _{x_1}=R^{\phi }$, 
and $(R^{\phi })^{\ep }$ is not normal. 
Take $1\le i_0\le n$ such that 
$x_{i_0}$ is transcendental over $M$ if $r<n$, 
and $x_{i_0}\not\in M$ if $r=n$. 
By replacing $x_i$ with $x_i+x_{i_0}$ 
if necessary for $i\ne i_0$, 
we may assume that 
$x_1,\ldots ,x_n$ are transcendental over $M$ 
if $r<n$, 
and $x_1,\ldots ,x_n\not\in M$ 
if $r=n$. 
Let $V\subset \sum _{i=1}^nkx_i$ be the $k$-vector space 
generated by $f^{\rm lin}$ for $f\in R$, 
where $f^{\rm lin}$ is the linear part of $f$. 
Then, 
$\dim _kV$ is at most $r$. 
Let $f_1,\ldots ,f_r\in R$ 
be algebraically independent over $k$. 
Then, the $r\times n$ matrix 
$[\partial f_i/\partial x_j]_{i,j}$ is of rank $r$. 
Hence, 
there exists a 
Zariski open subset $\emptyset \ne U\subset k^n$ 
such that, 
for each $a\in U$, 
the rank of 
$[(\partial f_i/\partial x_j)(a)]_{i,j}$ 
is $r$. 
Define $\tau _{a}\in \Aut _k\kx $ by 
$\tau _a(x_i)=x_i+a_i$ for each $i$, 
where $a=(a_1,\ldots ,a_n)$. 
Then, 
$\tau _{a}(f_i)^{\rm lin}$ is written as 
$\sum _{j=1}^n(\partial f_i/\partial x_j)(a)\cdot x_j$ 
for each $i$. 
Thus, 
replacing $M$ with $M^{\tau _{a}}$ 
for a suitable $a\in U$, 
we may assume that $\dim _kV=r$, 
and also $M\cap \kx _{x_i}=R$ for all $i$ 
by Lemma~\ref{lem:FM}. 
Since $r\ge 2$ by assumption, 
we may change the indices of $x_1,\ldots ,x_n$ 
so that, for $i=1,2$, there exists $g_i\in R$ with 
$g_i^{\rm lin}\in x_i+\sum _{j=3}^nkx_j$.

Now, 
define $\rho \in \Aut _{k[x_2,\ldots ,x_n]_{x_2}}\kx _{x_2}$ by 
$\rho (x_1)=x_1x_2$. 
Then, 
we have $\kx ^{\rho }\subset \kx $. 
Since $M\cap \kx _{x_2}=R$, 
we also have 
$M^{\rho }\cap \kx _{x_2}=R^{\rho }$. 
Thus, 
$M^{\rho }$ is minimal and 
$M^{\rho }\cap \kx =R^{\rho }$ by (a). 
If $r<n$, 
there exists 
$\alpha \in k^*$ 
such that $y:=x_1+\alpha x_2$ 
is transcendental over $M^{\rho }$, 
since so is $x_2=\rho (x_2)$. 
Similarly, 
if $r=n$, 
then 
$y:=x_1+\alpha x_2\not\in M^{\rho }$ holds 
for some $\alpha \in k^*$. 
In either case, 
there exists $\beta \in k$ 
such that $M^{\rho }\cap \kx _{y-\beta }=R^{\rho }$ 
by Lemma~\ref{lem:FM}, 
since 
$M^{\rho }$ is minimal, 
$M^{\rho }\cap \kx =R^{\rho }$ and 
$y,x_2,\ldots ,x_n$ 
is a system of variables. 
Define $\psi \in \Aut _k\kx $ by 
$\psi (y)=x_1+\beta $, 
$\psi (x_2)=x_2+x_1^2$ 
and $\psi (x_i)=x_i$ for $i\ne 1,2$, 
and set $\phi :=\psi \circ \rho $. 
Then, 
we have $\kx ^{\phi }\subset \kx $, 
and $M^{\phi }\cap \kx _{x_1}=R^{\phi }$, 
since $M^{\rho }\cap \kx _{y-\beta }=R^{\rho }$. 
Note that 
$\ep (\phi (x_i))=0$ for $i\ne 1,2$, 
$\ep (\phi (x_2))=x_1^2$ 
and 
$\ep (\phi (x_1))
\in (x_1+\beta )x_1^2+x_1^4k[x_1]$. 
Hence, 
we have 
$(R^{\phi })^{\ep }\subset k+x_1^2k[x_1]$, 
and 
$\ep (\phi (g_i))$ lies in 
$k+\ep (\phi (x_i))+x_1^4k[x_1]$ for $i=1,2$. 
Therefore, 
we get $1\not\in \Sigma _{(R^{\phi })^{\ep }}$ 
and $2,3\in \Sigma _{(R^{\phi })^{\ep }}$. 
By (b), 
this implies that $(R^{\phi })^{\ep }$ is not normal. 
\qed

\section{Examples and remarks}\label{sect:exrem}

\nd {\bf 4.1.} 
Define a system $y_1,\ldots ,y_n$ of variables by 
$y_2:=x_2-x_1+x_1^2$ and $y_i:=x_i$ for $i\ne 2$. 
Let $G$ be a permutation group on $\{ y_1,\ldots ,y_n\} $ 
such that $\tau (y_1)=y_2$ for some $\tau \in G$. 
We regard $G\subset \Aut _k\kx $ in a natural way. 
Then, we have

\begin{prop}\label{prop:example}
$\kxr ^G$ satisfies the condition $(\dag )$ 
with $\ol{R}=k[x_1^2,x_1^3]$. 
\end{prop}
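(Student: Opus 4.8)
The plan is to verify the three parts of $(\dag)$ for $R:=\kxr^G\cap\kx$, namely $R=\kxr^G\cap\kx=\kxr^G\cap\kx_{x_1}$, $Q(R)=\kxr^G$, and $\ol R:=R^\ep$ not normal, and to identify $\ol R$ with $k[x_1^2,x_1^3]$ along the way. Since $G$ acts by permuting the variables $y_1,\dots,y_n$, the invariant ring $\kx^G$ is just the ring of symmetric functions over each $G$-orbit, which is a polynomial ring; in particular $\kx^G=\kxr^G\cap\kx$ is normal, and $\kxr^G=Q(\kx^G)$, so $R=\kx^G$ and $Q(R)=\kxr^G$ are immediate. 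The content of $(\dag)$ is therefore concentrated in the two remaining assertions: that localizing at $x_1$ does not enlarge the intersection, and that the substitution $x_2,\dots,x_n\mapsto 0$ destroys normality.

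First I would compute $\ol R=R^\ep$. Under $\ep$ we have $y_1=x_1\mapsto x_1$, $y_2=x_2-x_1+x_1^2\mapsto -x_1+x_1^2$, and $y_i=x_i\mapsto 0$ for $i\ge 3$. Pick $\tau\in G$ with $\tau(y_1)=y_2$; then $y_1$ and $y_2$ lie in a common $G$-orbit, so the elementary symmetric functions of that orbit belong to $R$. Applying $\ep$ to the power sums (or elementary symmetric functions) over the orbit of $y_1$ kills every $y_i$ with $i\ge3$ and also kills the image of any orbit disjoint from $\{y_1,y_2\}$, leaving symmetric functions in the two values $x_1$ and $x_1^2-x_1$. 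A short computation with $p_1=x_1+(x_1^2-x_1)=x_1^2$ and $p_2=x_1^2+(x_1^2-x_1)^2=x_1^4-2x_1^3+2x_1^2$ (or better, $e_1=x_1^2$, $e_2=x_1(x_1^2-x_1)=x_1^3-x_1^2$, whence $x_1^3=e_2+e_1$) shows that $\ol R$ contains $x_1^2$ and $x_1^3$ but, since $\ep\circ G$-invariants are built from these two orbit-values symmetrically and every such symmetric expression has zero linear term in $x_1$, one gets $\ol R\subset k+x_1^2k[x_1]$; combined with the explicit elements this yields $\ol R=k[x_1^2,x_1^3]$. This ring is the classical cuspidal-cubic coordinate ring, which is not integrally closed in its field of fractions $k(x_1)$ (its normalization is $k[x_1]$), so the third clause of $(\dag)$ holds.

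It remains to show $\kxr^G\cap\kx=\kxr^G\cap\kx_{x_1}$. Since $\kxr^G\cap\kx_{x_1}=\kx^G_{x_1}\cap\kx$ and $\kx^G$ is a polynomial ring, this is a statement about whether inverting $x_1$ in the polynomial ring $\kx^G$ and then intersecting back with $\kx$ recovers $\kx^G$; equivalently, whether $x_1$ is not a zero divisor issue — rather, whether no new integral elements appear. Because $y_1=x_1$ itself is one of the variables permuted by $G$, and $G$ merely permutes a set of $k$-algebraically independent linear forms, $\kx^G$ is a polynomial ring in the orbit-symmetric functions, and $x_1$ is (up to the orbit) part of a regular system; one checks that $\kx^G_{x_1}\cap\kx=\kx^G$ by a direct degree/denominator argument on the orbit symmetric functions, exactly as in the analogous verifications in \cite{algext}. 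The main obstacle I expect is precisely this last clause: one must argue carefully that a $G$-invariant polynomial with a power of $x_1$ in the denominator, if it happens to lie in $\kx$, is already in $\kx^G$ — this requires using that $x_1$ appears among the permuted variables and tracking how $G$ moves $x_1$ around its orbit, rather than any softer normality argument. Once that is in hand, all three conditions of $(\dag)$ are verified and $\ol R=k[x_1^2,x_1^3]$, completing the proof.
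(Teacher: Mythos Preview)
Your computation of $\ol R$ is essentially correct and parallels the paper's: the orbit sums $I_m$ span $R$ as a $k$-vector space, and one checks $\ep(I_{y_1})=x_1^2$, $\ep(I_{y_1y_2})=x_1^3-x_1^2$, and $\ep(I_m)\in x_1^2k[x_1]$ for every monomial $m\ne 1$, whence $\ol R=k[x_1^2,x_1^3]$. Likewise $Q(R)=\kxr^G$ is standard for finite group actions. Note, however, that your assertion that $\kx^G$ is a polynomial ring is false for a general permutation group $G$ (think of alternating groups); fortunately that claim plays no essential role.

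The genuine gap is the clause $\kxr^G\cap\kx_{x_1}=R$. Your proposed identity $\kxr^G\cap\kx_{x_1}=\kx^G_{x_1}\cap\kx$ does not make sense: $x_1$ is not $G$-invariant, so it does not lie in $\kx^G$ and one cannot localize $\kx^G$ at it; nor does $G$ preserve $\kx_{x_1}$ (indeed $\tau(x_1)=y_2$), so $(\kx_{x_1})^G$ is not available either. The degree/denominator argument you gesture at has no foothold once this identification fails. The paper's argument is different and much shorter: apply $\tau$. Since $\tau$ fixes $\kxr^G$ pointwise and sends $x_1$ to $y_2$, every $f\in\kxr^G\cap\kx_{x_1}$ satisfies $f=\tau(f)\in\kx_{y_2}$, hence $f\in\kx_{x_1}\cap\kx_{y_2}=\kx$, the last equality because $x_1$ and $y_2=x_2-x_1+x_1^2$ are coprime in the UFD $\kx$. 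This one-line use of the group action is the missing idea.
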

\begin{proof}
Since $\tau (\kxr ^G)=\kxr ^G$ and $\tau (x_1)=y_2$, 
we have 
$$
\kxr ^G\cap \kx _{x_1}=
\kxr ^G\cap \kx _{x_1}\cap \kx _{y_2}=
\kxr ^G\cap \kx =R. 
$$
Since 
$R$ contains symmetric polynomials in $y_1,\ldots ,y_n$, 
we have $\trd _kR=n$. 
Hence, $\kxr ^G$ is minimal. 
The $k$-vector space $R$ 
is generated by $I_m$ for the monomials $m$ 
in $y_1,\ldots ,y_n$, 
where $I_m$ is the sum of the elements of the $G$-orbit of $m$. 
Since $\ep (I_{y_1})=x_1^2$, $\ep (I_{y_1y_2})=x_1^3-x_1^2$ 
and $\ep (I_m)\in x_1^2k[x_1]$ for all monomials $m\ne 1$, 
we have $\ol{R}=k[x_1^2,x_1^3]$, 
which is not normal. 
\end{proof}

For example, 
assume that $n=2$ and let $G=\langle \tau \rangle $. 
Then, 
we have $\kxr ^G=k(y_1+y_2,y_1y_2)$. 
Define $\theta =\theta _{(t_2)}^h\in \Aut _k\kxz _{x_1}$ 
as in (\ref{eq:theta}) 
for $h:=x_1$ and $t_2\ge 5$. 
Set $f:=y_1+y_2+y_1y_2$ and $g:=y_1+y_2$. 
Then, 
we have $\ep (f)/\ep (g)=h$, 
$\ep (f^2-g^3)=0$, 
and $\theta (f-gh),\theta (f^2-g^3)\in x_1\kx $. 
Therefore, 
$$
L:=k(y_1+y_2,y_1y_2,z)^{\theta }
=k(x_1^{t_2}x_2+x_1^{-2},
x_1^{t_2-1}x_2-x_1^{-2}+x_1^{-3}, 
z+x_1^{-1})
$$
is a counterexample to Problem~\ref{prob:H14}. 
We note that $[\kxzr :L]=2$.

\smallskip

\nd {\bf 4.2.} 
Let $G$ be a finite group with $|G|=n$, 
and write 
$\kxr =k(\{ x_{\sigma }\mid \sigma \in G\} )$. 
Let $k(G)$ be the invariant subfield of $\kxr $ 
for the $G$-action defined by 
$\tau \cdot x_{\sigma }:=x_{\tau \sigma }$ 
for each $\tau ,\sigma \in G$. 
Then, 
{\it Noether's 
Problem} asks whether 
$k(G)$ is {\it rational} over $k$, 
i.e., 
a purely transcendental extension of $k$. 
For various primes $p$, say $p=47$, 
it is known that $\Q (\Z /p\Z )$ is not rational over $\Q $ 
(cf.~\cite{Swan}). 
When $G$ is a finite abelian group, 
it is also known that 
$\Q (G)$ is rational over $\Q $ 
if and only if 
$\Q (G)(z_1,\ldots ,z_l)$ 
is rational over $\Q $ 
for some variables $z_1,\ldots ,z_l$ 
(cf.~\cite{EM}). 
These results and Theorem~\ref{thm:SFMP} 
imply the existence of 
a non-rational, minimal 
counterexample to Problem~\ref{prob:H14} 
for $k=\Q $ and $n\ge 48$. 
Since $G$ is considered as a permutation group on 
$\{ x_{\sigma }\mid \sigma \in G\} $, 
we can explicitly construct such an example 
using Proposition~\ref{prop:example} 
and Theorem~\ref{thm:key}.

\smallskip

\nd {\bf 4.3.} 
Let $B$ be a $k$-domain, 
$F:=Q(B)$, 
and $D\ne 0$ a {\it locally nilpotent $k$-derivation} of $B$, 
i.e., 
a $k$-derivation of $B$ such that, 
for each $a\in B$, 
there exists $l>0$ satisfying $D^l(a)=0$. 
Then, 
$\ker D$ is a $k$-subalgebra of $B$. 
We remark that 
there exists $\iota \in \Aut _kF$ for which 
$\iota ^2=\id $ and 
$F^{\langle \iota \rangle }\cap B=\ker D$ 
for the following reason: 
We can find $s\in B$ with 
$D(s)\ne 0$ and $D^2(s)=0$. 
It is known that such an $s$, 
called a {\it preslice} of $D$, 
is transcendental over $K:=Q(\ker D)$,  
and $F=K(s)$ and $B\subset K[s]$ hold 
(cf.\ e.g.~\cite[\S 1.3]{Essen}). 
Now, define $\iota \in \Aut _KK(s)\subset \Aut _kF$ 
by $\iota (s)=s^{-1}$. 
Then, 
we have $\iota ^2=\id $ and 
\begin{equation}\label{eq:involution}
F^{\langle \iota \rangle }\cap B
=F^{\langle \iota \rangle }\cap K[s]\cap B
=K(s+s^{-1})\cap K[s]\cap B
=K\cap B
=\ker D. 
\end{equation}

Assume that $B=\kx $, 
and the $k$-algebra $\ker D$ is not finitely generated 
(see \cite{DF5}, \cite{F6}, \cite{KM}, \cite{Roberts} 
and \cite{Rob} for such examples). 
Then, 
by (\ref{eq:involution}), 
we obtain counterexamples to Problem~\ref{prob:H14} 
of the form not only $L=Q(\ker D)$, 
but also $L=\kxr ^{\langle \iota \rangle }$. 
However, 
$\kxr ^{\langle \iota \rangle }$ 
is not minimal, 
since $\trd _k(\ker D)=n-1$.

\bibliographystyle{amsplain}

\noindent
Department of Mathematics and Information Sciences\\ 
Tokyo Metropolitan University \\ 
1-1  Minami-Osawa, Hachioji, 
Tokyo 192-0397, Japan\\
kuroda@tmu.ac.jp

\end{document}